\pgfplotsset{compat=newest}
\pgfplotsset{hide scale/.style={
/pgfplots/xtick scale label code/.code={},
/pgfplots/ytick scale label code/.code={}}}
\newlength\figureheight
\newlength\figurewidth
\providecommand{\inpro}[2]{\bigl ( #1,#2 \bigr)}
\providecommand{\inprov}[2]{\bigl ( #1,#2 \bigr)_{\mathcal V}}
\providecommand{\inprovi}[2]{\int \int \dotsm \int #1 #2 \inva{}_1\inva{}_2\dotsm\inva{}_N}
\providecommand{\inva}[1]{\text{~\textup{d}} #1}
\providecommand{\pinva}[1]{\text{~\textup{d}}\mathbb P_{#1}}
\providecommand{\nspinva}[1]{\text{\textup{d}}\mathbb P_{#1}}
\def\trp{{\mathsf{T}}}  
\DeclareMathOperator{\spann}{span}
\DeclareMathOperator{\pt}{\Pi}
\def\Vec{\mathop{\mathrm {vec}}\nolimits}
\providecommand\tnsr[1]{\ensuremath{\mathbf{#1}}}
\providecommand\htnsr[1]{\ensuremath{\hat{\mathbf{#1}}}}
\providecommand{\expova}[1]{\int_{\Gamma_N} \dotsm \int_{\Gamma_2} \int_{\Gamma_1} #1 \pinva 1\pinva 2\dotsm\pinva N}
\def\ouss{Y}
\def\oust{S}
\def\ousr{W}
\def\Ouss{\ensuremath{\mathcal{\ouss}}}
\def\Oust{\ensuremath{\mathcal{\oust}}}
\def\Ousr{\ensuremath{\mathcal{\ousr}}}
\def\hOuss{\ensuremath{\hat{\mathcal{\ouss}}}}
\def\hOust{\ensuremath{\hat{\mathcal{\oust}}}}
\def\hOusr{\ensuremath{\hat{\mathcal{\ousr}}}}
\def\Oustsr{\ensuremath{{\Oust \cdot \Ouss \cdot \Ousr}}}
\def\hOustsr{\ensuremath{{\hOust \cdot \hOuss \cdot \hOusr}}}
\def\Ousv{\ensuremath{\mathcal{V}}}
\providecommand\Ousvi[1]{\ensuremath{{\mathcal{V}_{#1}}}}
\def\Hoi{H_0^1(\Omega)}
\def\Lto{L^2(\Omega)}
\providecommand\Ltgi[1]{L^2(\Gamma_{#1};\nspinva{#1})}
\def\xkotkn{\ensuremath{\mathbf x^{k_1 k_2 \dotsm k_N}}}
\def\hPsi{{\hat \Psi}}
\providecommand\psij[2]{\psi_{#1}^{#2}}
\providecommand\alphaij[2]{\alpha_{#1}^{#2}}
\providecommand\wij[2]{w_{#1}^{#2}}
\providecommand\intgi[2]{\int_{\Gamma_{#1}}#2 \pinva{#1}}
\def\My{\mathbf{M}_\Ouss}
\providecommand\Mvi[1]{\mathbf{M}_{\Ousvi {#1}}}
\providecommand\Lvi[1]{\mathbf{L}_{\Ousvi {#1}}}
\providecommand\Lvit[1]{\mathbf{L}_{\Ousvi {#1}}^\trp}
\def\Ly{\mathbf{L}_\Ouss}
\providecommand{\ppsi}[2]{\psi _{#1}^{#2}}
\providecommand{\hpsi}[2]{\hat{\psi}_{#1}^{#2}}
\providecommand{\bbmat}{\begin{bmatrix}}
\providecommand{\ebmat}{\end{bmatrix}}
\def\Aa{\ensuremath{\mathbf A}}
\def\Aal{\ensuremath{\mathbf A_{\alpha}}}
\def\hAal{\ensuremath{\hat{\mathbf A}_{\alpha}}}
\def\pcedim{\texttt{pcedim}}
\def\poddim{\texttt{k'}}
\def\ctime{\texttt{runtime}}
\def\pcefive{\texttt{pce[5]}}
\def\pcethree{\texttt{pce[3]}}
\def\pcetwo{\texttt{pce[2]}}
\def\pcefour{\texttt{pce[4]}}
\providecommand{\pcex}[1]{\texttt{pce[#1]}}
\providecommand{\mcx}[1]{\texttt{mc[#1]}}
\def\yxo{y_1}
\def\yxt{y_2}
\def\Eyxo{\mathbb E y_1}
\def\Eyxt{\mathbb E y_2}
\def\Vyxo{\mathbb V y_1}
\def\Vyxt{\mathbb V y_2}
\newtheorem{theorem}{Theorem}[section]
\newtheorem{cor}[theorem]{Corollary}
\begin{document}

\title{Space and Chaos-Expansion Galerkin POD Low-order Discretization of PDEs for Uncertainty Quantification} 
\shorttitle{Space-PCE POD for PDEs with Uncertainties}

\author[$\ast$]{Peter Benner}
\affil[$\ast$]{Max Planck Institute for Dynamics of Complex Technical Systems,
  Magdeburg, Germany.\authorcr
  Email: \texttt{\href{mailto:benner@mpi-magdeburg.mpg.de}{benner@mpi-magdeburg.mpg.de}},
  ORCID: \texttt{\href{https://orcid.org/0000-0003-3362-4103}%
    {0000-0003-3362-4103}}}

\author[$\ast\ast$]{Jan Heiland}
\affil[$\ast\ast$]{Max Planck Institute for Dynamics of Complex Technical Systems,
  Magdeburg, Germany. \newline
Faculty of Mathematics, Otto von Guericke University Magdeburg, Germany.
  \authorcr
  Email: \texttt{\href{mailto:heiland@mpi-magdeburg.mpg.de}{heiland@mpi-magdeburg.mpg.de}},
  ORCID: \texttt{\href{https://orcid.org/0000-0003-0228-8522}%
    {0000-0003-0228-8522}}}

\shortauthor{P. Benner, J. Heiland}

\keywords{uncertainty quantification, model reduction, Proper Orthogonal
Decomposition, tensor spaces}

\msc{35R60, 60H35, 65N22}

\abstract{
    The quantification of multivariate uncertainties in partial differential
    equations can easily exceed any computing capacity unless proper measures
    are taken to reduce the complexity of the model. In this work, we propose a
    multidimensional Galerkin Proper Orthogonal Decomposition that optimally
    reduces each dimension of a tensorized product space. We provide the
    analytical framework and results that define and quantify the
    low-dimensional approximation. We illustrate its application for uncertainty
    modeling with Polynomial Chaos Expansions and show its efficiency in a
    numerical example.
}

\maketitle
\tableofcontents

\section{Introduction}
The statistically sound treatment of modeled uncertainties in simulations comes
with significant additional computational costs.
Since a deterministic model can already be arbitrarily complex, the computation of
statistics for general problems may soon become infeasible unless some kind of
model reduction is involved.

In this work, we propose a multidimensional Galerkin POD that can simultaneously
and optimally reduces the physical dimensions of the model and the dimensions
related to the uncertainties.

For the quantification of uncertainties in PDE models and their numerical
discretization, one may distinguish two categories of solvers \cite{Soi17} --
sampling based methods, notably the \emph{Monte-Carlo method}, and Galerkin-type
projection methods. In this work, we focus on the latter. For a basic
explanation and relevant references on the \emph{Monte-Carlo method} and its
extensions see \cite{Soi17}, for an application in elliptic PDEs see
\cite{CliGST11}, and for a combination with \emph{stochastic collocation} and
tensor techniques see \cite{HajNTT16}.

Galerkin-type methods for solving PDEs with uncertainties are also referred to
as \emph{spectral stochastic methods} and base on a \emph{polynomial chaos
expansion} (PCE) of the candidate solution. If the involved random variable is univariate,
this means that the solution is formally expanded in a space of univariate
polynomials. These additional degrees of freedom then, via a Galerkin
projection with respect to a measure that encodes the statistical properties of
the involved uncertainty, fix the uncertainty in the solution. If the involved
randomness is multivariate, multivariate polynomials are used to resolve the
uncertainty. Since every dimension of the multivariation adds a dimension to the
problem, a numerical discretization quickly becomes infeasible in terms of
memory requirements, even if the dimensions are treated independent of each
other.

Several approaches to overcome this complexity have been proposed like
\emph{sparse grids} \cite{Gar13}, construction of reduced chaos expansions via,
say, \emph{Proper Generalized Decomposition} \cite{Nou10, TamMN14} or
\emph{Principal Component Analysis} or \emph{Karhunen-Lo\`eve} expansions
\cite{ArnGPR14,AudN12,BiaAAL15}, or the use of tensor formats to reduce
 or to handle the data more efficiently \cite{BalG15,BenOS15,Kho15,Ull10}.

The proposed approach develops a reduction method for tensorized PCE
approximations. For a given PCE, we define bases both for the spatial and
the uncertainty dimensions that optimally represent the data. These generated
low-dimensional bases drastically reduce the overall dimension and can be used
for efficient uncertainty quantification and, perspectively, for optimal control
of systems with uncertain parameters.

Finding optimal representations for the dimensions is comparable to identifying
low-rank tensor structures for the data, as it has been treated in
\cite{BalG15, BenOS15, GarU17, Kho15, KhoS11}. In contrast to these works, where
a predefined structure is adaptively filled to approximate the solution, we take
a given, possibly high-dimensional data set, and reduce it. The justification of
this top-down approach is that the obtained reduction is optimally fitted to the
given problem so that it can be used for further efficient explorations --
mainly because this approach admits a direct interpretation of the bases for
Galerkin discretizations. This relation to Galerkin projections defines the
common ground with the PGD approaches \cite{Nou10}, where optimal bases are
construction in an adaptive bottom-up fashion.

Most similar to our approach is the work \cite{ArnGPR14} on \emph{reduced chaos expansions} of
coupled systems, where, basically, a Galerkin POD approach is used for two uncertainty
dimensions. There, the authors start with a PCE of bivariate random coefficient
and obtain optimal bases via the left and right eigenvectors of a generalized
eigenvalue problem involving a covariance matrix and a mass matrix. This
approach via the eigenvectors of a covariance matrix is one way to define a POD
basis (see, e.g., \cite{Row05}) while the inclusion of the mass matrix provides
optimality in the relevant discrete function spaces; see \cite{BauHS15}. Our approach
extends the scope of this work by introducing the tensorized formulation that
allows for reduction of multivariate uncertainties together with the spatial
dimension in one framework.

The paper is organized as follows. In \Cref{sec:multi-dim-gal-pod}, we review
the space-time Galerkin POD approach and how it extends to problems with an
uncertainty dimension. Then we formulate the Galerkin POD for a product space of
arbitrary dimensions and provide the POD compression algorithms and results.
Next, in \Cref{sec:pce-product-space}, we show that a PCE discretization
exactly fits into this multidimensional Galerkin POD framework. In 
\Cref{sec:application-example}, we illustrate the use of the PCE and its POD
reduction for a generic linear convection diffusion PDE. Finally, in
\Cref{sec:num-example} we provide a numerical example that shows the
applicability and efficiency of this approach and show that a naive POD
reduction based on random snapshots is not useful for PDEs with uncertain
parameters.

\section{Multidimensional Galerkin POD}\label{sec:multi-dim-gal-pod}

In our previous work \cite{BauBH18}, we introduced space-time Galerkin POD. The idea of locating space and time dependent functions 
\begin{equation*}
    x \in L^2((0,T);L^2(\Omega)) \colon (0,T) \times \Omega \mapsto \mathbb R^{} ,
\end{equation*}
that, e.g., solve a partial differential equation, in the space-time product space 
\begin{equation*}
    L^2((0, T))\cdot L^2(\Omega)
\end{equation*}
naturally extends to functions that depend on space, time and a random parameter $\alpha$ 
\begin{equation*}
    x_\alpha \in L^2((0,T);L^2(\Omega)) \colon (0,T) \times \Omega \mapsto \mathbb R^{} 
\end{equation*}
in the space-time-uncertainty product space
\begin{equation*}
    L^2((0, T))\cdot L^2(\Omega)\cdot L^2(\Gamma, \mathbb P_\alpha),
\end{equation*}
where $\Gamma $ is the domain of the random parameter and $\mathbb P$ is the
associated probability measure; see, e.g., \cite{GarU17} where stationary
problems are treated in this setup.
   
Also, the approach of considering the approximation in the product of the
discrete spatial $\Ouss\subset L^2(\Omega)$ and time $\Oust\subset L^2((0,T))$ spaces extends to approximating $x_\alpha$ in  
\begin{equation*}
\Oustsr,
\end{equation*}
where \Ousr~is the finite dimensional space that models a polynomial chaos expansion of $L^2(\Gamma, \mathbb P_\alpha) $.

And, finally one may approximate a function $\mathbf x$ via its orthogonal projection onto $\hOustsr$, where 
\begin{equation*}
    \hOuss \subset \Ouss,\quad \hOust\subset \Oust, \quad\text{and}\quad \hOusr \subset \Ousr
\end{equation*}
were chosen optimally with respect to $\mathbf x$ for given dimensions of the subspaces. 

We provide a general formulation of the product spaces, their discretization, and their optimal low-dimensional approximation. For $i=1,2,\dotsc,N$, let
\begin{equation*}
    \Ousvi i := \spann\{\psi_i^1, \psi_i^2, \dotsc, \psi_i^{d_i}\}
\end{equation*}
be $d_i$ dimensional Hilbert spaces with inner product $\inpro{\cdot}{\cdot}_{\Ousvi i} $  and mass matrix 
\begin{equation*}
\Mvi i = 
\begin{bmatrix}
    \inpro{\psi_i^k}{\psi_i^\ell}_\Ousvi i
\end{bmatrix}_{i=1,\dotsc, d_i, \ell=1,\dotsc, d_i}
\in \mathbb{R}^{d_i,d_i}.
\end{equation*}
We will use the formal vector of the basis functions
\begin{equation}\label{eqn:formalvecbasfuns}
    \Psi _i = 
    \begin{bmatrix} 
        \psi_i^1 \\ \psi_i^2 \\ \vdots \\\psi_i^{d_i},
    \end{bmatrix}
\end{equation}
to write, e.g.,
\begin{equation*}
    \Mvi i = \inpro{\Psi_i}{\Psi_i^\trp}_{\Ousvi i},
\end{equation*}
via applying the functional $\inpro{\cdot}{\cdot}\colon \psi_i^\ell \psi_i^k \mapsto \inpro{\psi_i^\ell}{ \psi_i^k}$ pointwise to the entries of the formal matrix ${\Psi_i}{\Psi_i^\trp}$.
Finally, let $\Lvi i \in \mathbb{R}^{d_i,d_i}$  be a factor such that 
\begin{equation*}
    \Mvi i= \Lvi i\Lvit i.
\end{equation*}

We consider the product space
\begin{equation*}
    \mathcal V = \prod_{i=1}^N \Ousvi i
\end{equation*}
of spaces of square integrable functions with the inner product 
\begin{equation*}
\inprov yz = \inprovi{y_i}{z_i},
\end{equation*}
where $\inva{}_i$ denotes the measure associated with $\Ousvi i$.

We represent a function $x\in \mathcal V$ via
\begin{equation*}
    x = \sum_{k_1 = 1}^{d_1}\sum_{k_2 = 1}^{d_2} \dotsm \sum_{k_N = 1}^{d_N} \xkotkn \psi_1^{k_1}\psi_2^{k_2}\dotsm\psi_N^{k_N}
\end{equation*}
or, equivalently, via the $N$-dimensional tensor of the coefficients
\begin{equation*}
    \mathbf X = \bigl[ \xkotkn  \bigr].
\end{equation*}
Note that 
\begin{equation}\label{eqn:functionvstensor}
    x = \Vec(\tnsr X)^\trp \bigl [\Psi_N \otimes \dotsm \otimes \Psi_2 \otimes \Psi_1 \bigr].
\end{equation}
\begin{theorem}\label{thm:tensornormasfrobnorm}
    For a function $x\in \mathcal V$ with its representation \tnsr X~as in
    \eqref{eqn:functionvstensor}, one has
    \begin{align*}
        \|x\|^2_{\Ousv} &=\int\int \dotsm \int x^2 \inva{}_1\inva{}_2 \dotsm \inva{}_N \\
               &= \| \Lvit 1 \tnsr X^{(1)}\bigl [\Lvi N \otimes \dotsm \otimes
               \Lvi 2  \bigr] \|_F^2,
    \end{align*}
where $\tnsr X ^{(1)}$ is the mode-1 matricization of the coefficient tensor
$\tnsr{X}$.
\end{theorem}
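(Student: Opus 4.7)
The plan is to reduce both sides of the identity to the same quadratic form in $\Vec(\tnsr X)$ and compare, using (i) tensor‑product integration, (ii) the factorization $\Mvi i = \Lvi i \Lvit i$, and (iii) standard vectorization/Kronecker identities.

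First I would expand the integral. Substituting the representation $x = \sum_{k_1,\ldots,k_N} \xkotkn \, \psi_1^{k_1}\cdots\psi_N^{k_N}$ into the product integral and using that the measure is a product measure (so $\int \psi_i^{k_i}\psi_i^{\ell_i}\inva{}_i = (\Mvi i)_{k_i,\ell_i}$) gives
\begin{equation*}
\|x\|_{\Ousv}^2 = \sum_{k_1,\ldots,k_N}\sum_{\ell_1,\ldots,\ell_N} \xkotkn\, x^{\ell_1\cdots\ell_N} \prod_{i=1}^N (\Mvi i)_{k_i,\ell_i}
= \Vec(\tnsr X)^\trp \bigl[\Mvi N \otimes \dotsm \otimes \Mvi 1\bigr] \Vec(\tnsr X),
\end{equation*}
where I assume the column–major convention so that in $\Vec(\tnsr X)$ the index $k_1$ varies fastest, then $k_2$, and so on. Using $\Mvi i = \Lvi i \Lvit i$ together with the mixed–product rule $(A\otimes B)(C\otimes D) = AC\otimes BD$, the Kronecker product factors as $[\Lvi N\otimes\dotsm\otimes \Lvi 1][\Lvit N\otimes\dotsm\otimes \Lvit 1]$, hence
\begin{equation*}
\|x\|_{\Ousv}^2 = \bigl\| \bigl[\Lvit N \otimes \dotsm \otimes \Lvit 1\bigr]\Vec(\tnsr X) \bigr\|_2^2.
\end{equation*}

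Second, I would rewrite the right–hand side of the claim in the same vectorized form. Under the same column–major ordering, the mode-1 matricization satisfies $\Vec(\tnsr X^{(1)}) = \Vec(\tnsr X)$, because both stack the entries with $k_1$ varying fastest and the remaining indices $(k_2,\ldots,k_N)$ in lexicographic order. The identity $\Vec(AYB) = (B^\trp \otimes A)\Vec(Y)$ then yields
\begin{equation*}
\Vec\!\Bigl( \Lvit 1\, \tnsr X^{(1)}\bigl[\Lvi N \otimes \dotsm \otimes \Lvi 2\bigr]\Bigr)
= \bigl( \Lvit N \otimes \dotsm \otimes \Lvit 2 \otimes \Lvit 1 \bigr)\Vec(\tnsr X),
\end{equation*}
and since the Frobenius norm of a matrix equals the Euclidean norm of its vectorization, the right–hand side of the theorem equals the same expression obtained in the previous paragraph. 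Matching the two concludes the proof.

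The only real obstacle is bookkeeping: verifying that the Kronecker ordering produced by the mode-1 unfolding matches the reversed ordering $\Lvi N\otimes\dotsm\otimes \Lvi 2$ appearing in the statement. I would therefore state the column–major vec convention explicitly and, if needed, cite the standard identity $\Vec(\tnsr X^{(1)}) = \Vec(\tnsr X)$ (under that convention) so that the identification of the two quadratic forms is unambiguous; everything else is a routine application of the mixed–product rule and the factorization $\Mvi i = \Lvi i\Lvit i$.
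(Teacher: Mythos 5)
Your proof is correct and follows essentially the same route as the paper: both reduce $\|x\|^2_{\Ousv}$ to the quadratic form $\Vec(\tnsr X)^\trp\bigl[\Mvi N\otimes\dotsm\otimes\Mvi 1\bigr]\Vec(\tnsr X)$, factor it via $\Mvi i = \Lvi i\Lvit i$, and identify the resulting Euclidean norm with the Frobenius norm of the matricized product. The only difference is cosmetic: the paper passes from the vectorized to the matricized form through a chain of $\mu$-mode product identities, whereas you invoke the equivalent identity $\Vec(AYB)=(B^\trp\otimes A)\Vec(Y)$ together with $\Vec(\tnsr X^{(1)})=\Vec(\tnsr X)$ directly, with the ordering bookkeeping you flag handled correctly.
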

\begin{proof}
    We use the properties of the \emph{Kronecker}-product $\otimes$, the
    $\mu$-mode tensor product $\circ_\mu$, the vectorization operator $\Vec$,
    and the $\mu$-mode matricization operator $\cdot ^{(\mu)}$ to directly
    compute
    \begin{align*}
        \|x\|^2_{\Ousv} &=\int\int \dotsm \int x^2 \inva{}_1\inva{}_2 \dotsm \inva{}_N \\
               &= \Vec(\tnsr X)^\trp\int\int \dotsm \int \bigl [\Psi_N\Psi_N^{\trp} \otimes \dotsm \otimes \Psi_2\Psi_2^{\trp} \otimes \Psi_1\Psi_1^{\trp} \bigr] \inva{}_1\inva{}_2 \dotsm \inva{}_N \Vec(\tnsr X)\\
               &= \Vec (\tnsr X)^\trp \bigl [\Mvi N \otimes \dotsm \otimes \Mvi 2 \otimes \Mvi 1 \bigr] \Vec(\tnsr X) \\
               &= \|\bigl [\Lvit N \otimes \dotsm \otimes \Lvit 2 \otimes \Lvit 1 \bigr] \Vec(\tnsr X) \|_2^2 \\
               &= \|\bigl [\Lvit N \otimes \dotsm \otimes \Lvit 2 \otimes I \bigr]\bigl [I \otimes \dotsm \otimes I \otimes \Lvit 1 \bigr] \Vec(\tnsr X) \|_2^2 \\
               &= \|\bigl [\Lvit N \otimes \dotsm \otimes \Lvit 2 \otimes I \bigr]\Vec(\Lvit 1 \circ_1 \tnsr X) \|_2^2 \\
               &= \|\Vec\bigl ( \bigl [\Lvit N \otimes \dotsm \otimes \Lvit 2 \bigr]\circ_2 (\Lvit 1 \circ_1 \tnsr X)\bigr ) \|_2^2 \\
               &= \| \bigl [\Lvit N \otimes \dotsm \otimes \Lvit 2 \bigr]\circ_2 (\Lvit 1 \circ_1 \tnsr X) \|_F^2 \\
               &= \| (\Lvit 1\circ_1 \tnsr X)^{(1)}\bigl [\Lvi N \otimes \dotsm \otimes \Lvi 2  \bigr] \|_F^2 \\
               &= \| \Lvit 1 \tnsr X^{(1)}\bigl [\Lvi N \otimes \dotsm \otimes \Lvi 2  \bigr] \|_F^2.
    \end{align*}
\end{proof}

    By permutations of the tensor $\tnsr X$, the dimension associated with any
    $\Ousvi i$ can take the role of the first dimension with $\Lvi 1$ in the
    formula of \Cref{thm:tensornormasfrobnorm}. To avoid technicalities,
    we will consider permutations that simply cycle through the dimensions.
    Therefore, we introduce the operator that permutes a tensor
    \begin{equation*}
        \pt
        \colon \tnsr X\in\mathbb R^{d_1,d_2, \dotsc, d_N}   \mapsto \pt{\tnsr X} \in \mathbb R^{d_2, \dotsc, d_N,d_1}
    \end{equation*}
    via 
    \begin{equation*}
    \bigl[ (\pt \mathbf x)^{k_1 k_2 \dotsm k_N} \bigr]
    =
    \bigl[ \mathbf x^{k_2 \dotsm k_N k_1 } \bigr].
\end{equation*}
Note that $\pt^N \tnsr X = \tnsr X$ and that, for matrices $\mathbf M$ (where
$N=2$), it holds that $\pt \tnsr M = \tnsr M^\trp$.

\begin{cor}[of \Cref{thm:tensornormasfrobnorm}]\label{cor:normwcycleddims}
    For any $i\in\{1,\dotsc, N\}$, the norm of $x\in\Ousv$ can be expressed as
    \begin{equation*}
        \|x\|_{\Ousv}^2 = \| \Lvit i (\pt^{i-1} \tnsr X)^{(1)}\bigl [\Lvi {i-1} \otimes \dotsm \otimes \Lvi 1 \otimes \Lvi N \otimes \dotsm \otimes \Lvi {i+1}  \bigr] \|_F^2,
    \end{equation*}
    with the convention that $\Lvi {i-1} \otimes \dotsm \otimes \Lvi 1$ is void for $i=1$ as is $\Lvi N \otimes \dotsm \otimes \Lvi {i+1}$ for $i=N$.
\end{cor}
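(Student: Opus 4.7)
The plan is to reduce the statement to a direct application of \Cref{thm:tensornormasfrobnorm} by relabeling the dimensions. The value $\|x\|_\Ousv^2$, written as the multiple integral $\int\!\int\!\dotsm\!\int x^2 \inva{}_1 \inva{}_2 \dotsm \inva{}_N$, is invariant under any reordering of the integrals by Fubini's theorem. Consequently, we are free to relabel the family of spaces $(\Ousvi 1,\Ousvi 2,\dotsc,\Ousvi N)$ to $(\Ousvi i,\Ousvi{i+1},\dotsc,\Ousvi N,\Ousvi 1,\dotsc,\Ousvi{i-1})$ and compute the norm with respect to this cyclically shifted ordering.

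The key observation is to identify the coefficient tensor of $x$ in the new ordering with $\pt^{i-1}\tnsr X$. By definition, one application of $\pt$ sends a tensor of shape $(d_1,d_2,\dotsc,d_N)$ to one of shape $(d_2,\dotsc,d_N,d_1)$ via the index shift $k_1 k_2 \dotsm k_N \mapsto k_2 \dotsm k_N k_1$. Iterating $i-1$ times cycles the first $i-1$ indices to the back, yielding a tensor indexed in the order $(k_i,k_{i+1},\dotsc,k_N,k_1,\dotsc,k_{i-1})$, which is precisely the coefficient array of $x$ when the basis vectors $\Psi_i,\Psi_{i+1},\dotsc,\Psi_N,\Psi_1,\dotsc,\Psi_{i-1}$ are treated as the first, second, \dotsc, $N$-th basis in the product representation~\eqref{eqn:functionvstensor}.

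Once this identification is in place, I apply \Cref{thm:tensornormasfrobnorm} verbatim to the reordered product $\Ousvi i \cdot \Ousvi{i+1}\cdot\dotsm\cdot\Ousvi N \cdot \Ousvi 1 \cdot\dotsm\cdot \Ousvi{i-1}$. The mass-matrix factor that takes the role of $\Lvi 1$ is $\Lvi i$, which contributes the leading $\Lvit i$ multiplied against the mode-1 matricization $(\pt^{i-1}\tnsr X)^{(1)}$; the factors that take the role of $\Lvi N \otimes \dotsm \otimes \Lvi 2$ in the original theorem are the remaining $\Lvi j$ in reverse order of their appearance in the shifted tuple, which gives exactly $\Lvi{i-1}\otimes \dotsm \otimes \Lvi 1 \otimes \Lvi N \otimes \dotsm \otimes \Lvi{i+1}$. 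The edge cases $i=1$ and $i=N$ are handled by the stated conventions that the corresponding empty Kronecker blocks are void, and for $i=1$ one recovers \Cref{thm:tensornormasfrobnorm} itself since $\pt^{0}=\mathrm{Id}$.

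The only genuine obstacle is bookkeeping: verifying that the reversed ordering convention used in the Kronecker products of \Cref{thm:tensornormasfrobnorm} (namely \emph{highest} index first) produces, after cyclic shift, the pattern $\Lvi{i-1}\otimes\dotsm\otimes\Lvi 1\otimes\Lvi N\otimes\dotsm\otimes\Lvi{i+1}$ stated in the corollary. This is a one-line check: in the shifted tuple the last entry is $\Lvi{i-1}$, the second-to-last is $\Lvi{i-2}$, and so on down to $\Lvi 1$, followed by $\Lvi N,\dotsc,\Lvi{i+1}$, matching the claim. Everything else is a direct invocation of the already proved theorem.
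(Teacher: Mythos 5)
Your argument is correct and is essentially the paper's own justification: the paper gives no separate proof of the corollary, but the remark immediately preceding it states exactly your idea, namely that by cyclically permuting the tensor (via $\pt^{i-1}$) the dimension $i$ takes the role of the first dimension in \Cref{thm:tensornormasfrobnorm}. Your bookkeeping of the resulting Kronecker ordering $\Lvi{i-1}\otimes\dotsm\otimes\Lvi 1\otimes\Lvi N\otimes\dotsm\otimes\Lvi{i+1}$ and of the edge cases $i=1$, $i=N$ is accurate.
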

With these expressions for the norm of the function $x\in\Ousv$ related to a tensor $\tnsr X$ via \eqref{eqn:functionvstensor}, we can provide an interpretation of the \emph{higher-order singular value decomposition} \cite{DeDV00} in terms of low-dimensional space discretizations as it is the backbone of the POD.
\begin{theorem}\label{thm:multidimpodspaces}
    Given $x\in\Ousv$. For any $i\in\{1,\dotsc, N\} $ and for a corresponding $\hat d_i \leq d_i$, the space spanned by 
    \begin{equation*}
        \hat \Psi_i  =
    \begin{bmatrix} 
        \hpsi i1 \\ \hpsi i2 \\ \vdots \\\hpsi i{\hat d_i}
    \end{bmatrix}
    :=V_{i,\hat d_i}^\trp  \Lvi i^{-1} 
    \begin{bmatrix} 
        \ppsi i1 \\ \ppsi i2 \\ \vdots \\\ppsi i{d_i}
    \end{bmatrix}
    =V_{i,\hat d_i}^\trp  \Lvi i^{-1} \Psi_i,
    \end{equation*}
where $V_{i,\hat d_i}$ is the matrix of the $\hat d_i$ leading left singular vectors of 
\begin{equation*}
\Lvit i (\pt^{i-1} \tnsr X)^{(1)}\bigl [\Lvi {i-1} \otimes \dotsm \otimes \Lvi 1 \otimes \Lvi N \otimes \dotsm \otimes \Lvi {i+1}  \bigr],
\end{equation*}
optimally approximates $\Ousvi i$ in the sense that $x$ is best approximated in 
\begin{equation*}
    \Ousvi 1\cdot \Ousvi 2 \dotsm \Ousvi {i-1} \cdot \hat {\Ousv}_i \cdot \Ousvi {i+1} \dotsm \Ousvi N
\end{equation*}
in the \Ousv-norm over all subspaces of $\Ousvi i$ of dimension $\hat d_i$.
\end{theorem}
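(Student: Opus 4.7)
The plan is to reduce the optimization to a matrix low-rank approximation problem to which the Eckart--Young--Mirsky theorem applies, using \Cref{cor:normwcycleddims} as the bridge between functional norms and Frobenius norms.

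First, I would apply \Cref{cor:normwcycleddims} to write $\|x\|^2_{\Ousv} = \|M_i\|_F^2$, where
\begin{equation*}
    M_i := \Lvit i (\pt^{i-1} \tnsr X)^{(1)} \bigl[\Lvi {i-1} \otimes \dotsm \otimes \Lvi 1 \otimes \Lvi N \otimes \dotsm \otimes \Lvi {i+1}\bigr].
\end{equation*}
Next, I would parameterize any candidate $\hat d_i$-dimensional subspace $\hat{\Ousv}_i \subset \Ousvi i$ by an $\Ousvi i$-orthonormal basis. Using $\Mvi i = \Lvi i \Lvit i$, one checks that the formula $\hat \Phi_i = V^\trp \Lvi i^{-1} \Psi_i$ yields an orthonormal system in $\Ousvi i$ if and only if $V \in \mathbb{R}^{d_i,\hat d_i}$ satisfies $V^\trp V = I$, so every admissible subspace arises this way.

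Since $\inprov{\cdot}{\cdot}$ is a product inner product, the orthogonal projection onto $\Ousvi 1 \dotsm \hat{\Ousv}_i \dotsm \Ousvi N$ factors as the identity in every direction $j \neq i$ times the orthogonal projection $P_i$ onto $\hat{\Ousv}_i$ in direction $i$. A short calculation using $\inpro{\Psi_i}{\hat\Phi_i^\trp}_{\Ousvi i} = \Mvi i \Lvi i^{-\trp} V = \Lvi i V$ yields $P_i \Psi_i = T\, \Psi_i$ with $T = \Lvi i V V^\trp \Lvi i^{-1}$. Then, tracking how $T$ acts on the coefficient representation \eqref{eqn:functionvstensor}, the projected function $x'$ has coefficient tensor $\tnsr X' = \tnsr X \times_i T^\trp$, and after the cyclic permutation $\pt^{i-1}$ sends mode $i$ to mode $1$, this becomes $(\pt^{i-1}\tnsr X')^{(1)} = T^\trp (\pt^{i-1}\tnsr X)^{(1)}$. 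Multiplying from the left by $\Lvit i$ and simplifying $\Lvit i T^\trp = \Lvit i \Lvi i^{-\trp} V V^\trp \Lvit i = V V^\trp \Lvit i$ shows that the matrix associated to $x'$ via \Cref{cor:normwcycleddims} is $V V^\trp M_i$.

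Applying \Cref{cor:normwcycleddims} to $x - x'$ thus gives
\begin{equation*}
    \|x - x'\|^2_{\Ousv} = \|(I - V V^\trp) M_i\|_F^2.
\end{equation*}
The Eckart--Young--Mirsky theorem says that, over all $V \in \mathbb R^{d_i,\hat d_i}$ with $V^\trp V = I$, this Frobenius-norm residual is minimized exactly when the columns of $V$ are the $\hat d_i$ leading left singular vectors of $M_i$, i.e.\ $V = V_{i,\hat d_i}$, which recovers the prescribed $\hat \Psi_i$. The main obstacle, and the step that deserves careful bookkeeping, is the fourth one: verifying that the projection in direction $i$ translates through the cyclic permutation $\pt^{i-1}$ and the mode-$1$ matricization into a clean left-multiplication by $VV^\trp$ on $M_i$. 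This is where the compatibility between the Cholesky-type factor $\Lvi i$ built into $M_i$ and the orthonormal parameterization $\hat \Phi_i = V^\trp \Lvi i^{-1}\Psi_i$ becomes essential, as it is precisely the cancellation $\Lvit i \Lvi i^{-\trp} = I$ that removes the mass-matrix weighting and leaves a standard low-rank approximation problem for $M_i$.
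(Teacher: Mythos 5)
Your proof is correct, and it takes a genuinely different route from the paper's. The paper disposes of the theorem in two lines: for $i=1$ it groups the product space as $\Ousvi 1\cdot\mathcal W$ with $\mathcal W=\Ousvi 2\dotsm\Ousvi N$ and invokes the two-factor Galerkin-POD optimality result \cite[Lem.~2.5]{BauBH18} as a black box, and for general $i$ it first applies \Cref{cor:normwcycleddims} to cycle mode $i$ into the leading position. You instead unfold the content of that cited lemma from first principles: you parameterize every admissible $\hat d_i$-dimensional subspace by an $\Ousvi i$-orthonormal frame $V^\trp\Lvi i^{-1}\Psi_i$ with $V^\trp V=I$ (correctly, since $V^\trp\Lvi i^{-1}\Mvi i\Lvi i^{-\trp}V=V^\trp V$), identify the best approximation with the orthogonal projection, push the mode-$i$ projection through the permutation and matricization to get the residual matrix $(I-VV^\trp)M_i$, and finish with Eckart--Young--Mirsky; the cancellation $\Lvit i\Lvi i^{-\trp}=I$ that you flag is indeed the crux that strips the mass-matrix weighting. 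All the individual computations check out ($\inpro{\Psi_i}{\hat\Phi_i^\trp}_{\Ousvi i}=\Lvi iV$, hence $T=\Lvi iVV^\trp\Lvi i^{-1}$ and $\Lvit iT^\trp=VV^\trp\Lvit i$). What the paper's version buys is brevity and reuse of an established result; what yours buys is a self-contained, citation-free argument that makes the mechanism of the weighted POD explicit --- at the cost of having to verify the bookkeeping of the mode-$i$ product under $\pt^{i-1}$, which you do correctly. Both arguments rely on \Cref{cor:normwcycleddims} in the same way to reduce general $i$ to the mode-1 case.
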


\begin{proof}
    For $i=1$, the claim follows directly from \cite[Lem. 2.5]{BauBH18} with considering $\Ousvi 1 \cdot \mathcal W$, and $\mathcal W:= \Ousvi 2\cdot \Ousvi 2 \dotsm \Ousvi N$. For any other $i$, one can apply Corollary \ref{cor:normwcycleddims} first.
\end{proof}

For the overall projection error between $x$ and its projection $\hat x$ onto 
\begin{equation*}
    \hat {\Ousv}_1 \cdot \hat {\Ousv}_2 \cdot \dotsm \cdot \hat {\Ousv}_N
\end{equation*}
with $\hat{\Ousv}_i$ of dimension $\hat d_i$ as defined in
\Cref{thm:multidimpodspaces}, one has that
\begin{equation}\label{eqn:projectionerror}
    \|x-\hat x\|_{\Ousv}^2 \leq \sum_{k_1=\hat d_1 +1}^{d_1} {\sigma_{k_1}^{(1)}}^2
    + \sum_{k_2=\hat d_2 +1}^{d_2} {\sigma_{k_2}^{(2)}}^2
    + \dotsm
    + \sum_{k_N=\hat d_N +1}^{d_N} {\sigma_{k_N}^{(N)}}^2,
\end{equation}
where $\sigma_{k}^{(i)}$ is the $k$-th singular value of $\tnsr X^{(i)}$ as they
appear in the SVD for the definition of $\hat{\Ousv}_i$. The estimate
\eqref{eqn:projectionerror} follows directly from \cite[Eqn. (24)]{DeDV00} if
one takes into account the scalings by the factors of the mass matrices. Note
that while a single $\Ousvi i$ is optimally approximated by $\hat \Ousv _i $ by
virtue of \Cref{thm:multidimpodspaces}, the approximation of $\Ousv$ by
$\prod _{i=1}^N \hat{\Ousv}_i$ might not be optimal in the same sense; see the
discussion in \cite[p. 1267]{DeDV00}.

\section{Polynomial Chaos Expansion as Product Space}\label{sec:pce-product-space}

Let
\begin{equation*}
    \alpha  = \begin{pmatrix} \alpha_1 , \alpha_2 , \dotsc, \alpha_N \end{pmatrix}
\end{equation*}
be a tuple of random variables $\alpha_i$ that take on values in a domain $\Gamma_i\subset \mathbb R$ and that are distributed according to a probability measure $\pinva{\alpha_i}$. If $\tilde y$ is a function that depends on $\alpha$, that for every realization of $\alpha$ takes on values in a Hilbert space, say, $L^2(\Omega)$ for a domain $\Omega$ in $\mathbb R^{2}$ or $\mathbb R^{3}$, and that has a bounded variance with respect to $\alpha$, one may approximate $\tilde y$ by a suitable
\begin{equation}\label{eqn:fempceansatz}
    y \in L^2(\Omega) \cdot \Ltgi 1 \cdot \Ltgi 2 \cdot \dotsm \cdot \Ltgi N.
\end{equation}
Note that $y$ is a random variable and that the expected value $\mathbb E y \in \Lto $  of $y$ is defined as
\begin{equation*}
    \mathbb E y = \expova{y}.
\end{equation*}
A finite dimensional approximation $y$ to $\tilde y$ can be sought in 
\begin{equation}\label{eqn:fempceproductansatz}
    \Ousv = \Ousvi 0 \cdot \Ousvi 1 \cdot \Ousvi 2 \cdot \dotsm \cdot \Ousvi N
\end{equation}
where $\Ousvi 0 \subset \Lto$ is a Finite Element space and where, for $i=1, \dotsc, N$, $\Ousvi i$ is a finite dimensional subspace of $\Ltgi i$ derived from a \emph{Polynomial Chaos Expansion}. Here we will consider $d_i$-dimensional spaces
\begin{equation*}
    \Ousvi i = \spann\{\psij i1, \psij i2, \dotsc, \psij i{d_i}\},
\end{equation*}
with $\psij ik$ being the Lagrange polynomials of degree $d_i-1$ defined through
the distinct nodes
\begin{equation*}
\{\alphaij i1, \alphaij i2, \dotsc, \alphaij i{d_i} \} \subset \Gamma_i.
\end{equation*}
As for the nodes, we choose the Gaussian quadrature nodes
with respect to the measure $\nspinva i$; see \cite{FerA07} for formulas and
algorithms. With the corresponding quadrature weights
\begin{equation*}
    \{\wij i1, \wij i2, \dotsc, \wij i{d_i} \} \subset \mathbb R^{d_i},
\end{equation*}
the quadrature formula
\begin{equation}\label{eqn:probquadrt}
    \intgi i{z(\alpha)} \approx \sum_{k=1}^{d_i} \wij ik z(\alphaij ik)
\end{equation}
is exact for polynomials up to degree $2d_i -1$. By virtue of this exactness,
and since the Lagrange polynomials are orthogonal and fulfill $\psij ik(\alphaij ij)=1$ if $k=j$ and $\psij ik(\alphaij ij)=0$
if $k\neq j$, for the mass matrix $\Mvi i$, one has that 
\begin{equation*}
    \Mvi i = \intgi i{\Psi_i \Psi_i^\trp} = 
    \begin{bmatrix}
        \wij i1 &&& \\
        & \wij i2 && \\
        && \ddots & \\
        &&& \wij i{d_i} 
    \end{bmatrix}.
\end{equation*}

\section{Application Example}\label{sec:application-example}
For a domain $\Omega\subset\mathbb R^{d}$, with $d=2$ or $d=3$, for a
given-right hand side $f\in L^2(\Omega)$ and a given vector field $b\in [L^2(\Omega)]^d$,
we consider the generic \emph{convection-diffusion} problem
\begin{equation}\label{eqn:poissonproblem}
    b\cdot \nabla y- \nabla\cdot ( \kappa_\alpha \nabla y) = f,
\end{equation}
where we assume that the diffusivity coefficient depends on a random vector $\alpha=(\alpha_1, \dotsc , \alpha_N)$.

For the derivation, we assume homogeneous Dirichlet conditions or homogeneous
Neumann conditions for the boundary. Nonzero boundary conditions can be included
in standard ways.

If, for given $f$ and $b$, system \eqref{eqn:poissonproblem} has a solution $y$ for any realization of $\alpha$, then $y$ itself can be seen as a random variable depending on $\alpha$. 

As in standard finite element approaches, for every realization $\alpha$, we locate the corresponding solution $y_\alpha$ in $H_0^1(\Omega)$ and require \eqref{eqn:poissonproblem} to hold in the weak sense, namely 
\begin{equation}\label{eqn:poissonweak}
    \int_\Omega v(x) b(x)\cdot \nabla y_\alpha (x) + \kappa_\alpha \nabla v(x)\cdot \nabla y_\alpha (x) \inva x = \int_\Omega v(x)f(x) \inva x
\end{equation}
for all $v\in \Hoi$.

To account for the uncertainty, we assume the solution in the product space of
the space variable and the uncertainty dimensions as in
\eqref{eqn:fempceansatz} and require \eqref{eqn:poissonweak} to hold in expectation, i.e.

\begin{equation}\label{eqn:poissonuncrtnweak}
    \begin{split}
    \expova{\int_\Omega v b\cdot \nabla y + \kappa_\alpha \nabla v\cdot \nabla y \inva x }= \\
    \expova{\int_\Omega vf \inva x},
\end{split}
\end{equation}
where now $v$ is a trial function from the ansatz space 
\begin{equation*}
    \Hoi \cdot \Ltgi 1 \cdot \Ltgi 2 \cdot \dotsm \cdot \Ltgi N.
\end{equation*}

We may cluster the uncertainty dimensions $\Gamma_i$ into $\Gamma$ and write 
\begin{equation*}
    \int_\Gamma v(\alpha) \pinva{} \quad \text{instead of}\quad \expova{v(\alpha_1,
    \dotsc, \alpha_N)}.
\end{equation*}

For a finite dimensional approximation, let the FEM space $\Ousv_0$ be spanned
by $\Psi_0$ (compare \eqref{eqn:formalvecbasfuns}) and let $\Aal\in\mathbb
R^{d_0,d_0}$  be the discrete convection/diffusion operator:
\begin{equation*}
    \Aal = \int_\Omega \Psi_0 (b\cdot \nabla\Psi _0 ^\trp - \nabla\cdot \kappa_\alpha \nabla \Psi_0^\trp ) \inva x 
    = \int_\Omega \Psi_0 b\cdot \nabla\Psi _0 ^\trp + \kappa_\alpha \nabla \Psi_o\cdot \nabla \Psi_0 ^\trp \inva x,
\end{equation*}
where the products and the application of the differential operators are
understood component-wise. 

To save space in the formal derivation of the equations that include the
Polynomial Chaos Expansions we will \emph{formally} use the \emph{strong}
differential operator
\begin{equation*}
    a_\alpha\colon y \mapsto b\cdot \nabla y - \nabla \cdot (\kappa_\alpha \nabla y).
\end{equation*}
   
With that, with discrete ansatz spaces as in \eqref{eqn:fempceproductansatz}, and with the ansatz for the solution
\begin{equation}\label{eqn:ansatzforvecy}
    y = \Vec(\tnsr Y)^\trp \bigl [\Psi_N \otimes \dotsm \otimes \Psi_1  \otimes \Psi_0 \bigr ] =
    \bigl [\Psi_N^\trp \otimes \dotsm \otimes \Psi_1^\trp  \otimes \Psi_0^\trp \bigr ]\Vec(\tnsr Y),
\end{equation}
where $\tnsr Y$ is the tensor of coefficients (cp.
\eqref{eqn:functionvstensor}), Equation \eqref{eqn:poissonuncrtnweak} is
discretized as 
\begin{equation*}
    \int_\Gamma \int_\Omega 
    \bigl [\Psi_N \otimes \dotsm \otimes  \Psi_1  \otimes \Psi_0 \bigr ] a_\alpha y \inva x \pinva{} 
    = \int_\Gamma \int_\Omega \bigl [\Psi_N \otimes \dotsm \otimes \Psi_1  \otimes \Psi_0 \bigr ] f \inva x \pinva{},
\end{equation*}
where the left hand side, together with \eqref{eqn:ansatzforvecy}, becomes 
\begin{equation}\label{eqn:galerkinprojsys}
    \begin{split}
    \int_\Gamma \int_\Omega 
     \bigl [&\Psi_N \otimes \dotsm \otimes \Psi_1  \otimes \Psi_0 \bigr ] a_\alpha \bigl [\Psi_N^\trp \otimes \dotsm \otimes \Psi_1^\trp  \otimes \Psi_0^\trp \bigr ] \inva x \pinva{} \Vec(\tnsr Y) = \\
    &= \int_\Gamma \int_\Omega 
    \bigl [\Psi_N \otimes \dotsm \otimes \Psi_1  \otimes \Psi_0 \bigr ] \bigl [\Psi_N^\trp \otimes \dotsm \otimes \Psi_1^\trp  \otimes a_\alpha \Psi_0^\trp \bigr ] \inva x \pinva{} \Vec(\tnsr Y) \\
    &= 
    \int_\Gamma \int_\Omega 
    \bigl [\Psi_N\Psi_N^\trp  \otimes \dotsm \otimes \Psi_1 \Psi_1^\trp   \otimes \Psi_0 a_\alpha \Psi_0^\trp  \bigr ] \inva x \pinva{} \Vec(\tnsr Y) \\
    &= 
    \int_\Gamma 
    \bigl [\Psi_N\Psi_N^\trp  \otimes \dotsm \otimes \Psi_1 \Psi_1^\trp   \otimes \int_\Omega \Psi_0 a_\alpha \Psi_0^\trp  \inva x \bigr ] \pinva{} \Vec(\tnsr Y) \\
    &=
    \expova{
    \bigl [\Psi_N\Psi_N^\trp  \otimes \dotsm \otimes \Psi_1 \Psi_1^\trp   \otimes \Aal\bigr ] 
    }
    \Vec(\tnsr Y)
\end{split}
\end{equation}
thanks to the linearity of the involved differential operators and the Kronecker products. 

Next we successively approximate the integrals with respect to the probability measures by
the corresponding quadrature rules (cp. \eqref{eqn:probquadrt}) to obtain 
\begin{equation*}
    \begin{split}
    \expova{
    \bigl [\Psi_N\Psi_N^\trp  \otimes \dotsm \otimes \Psi_1 \Psi_1^\trp   \otimes \Aal\bigr ] 
    }
    \Vec(\tnsr Y) \\
    \approx
    \int_{\Gamma_N} \dotsm \int_{\Gamma_2} \sum_{k_1=1}^{d_1} \wij 1{k_1}
    \bigl [\Psi_N \Psi_N^\trp  \otimes \dotsm \otimes \Psi_1(\alphaij 1{k_1})
        \Psi_1(\alphaij 1{k_1})^\trp   \otimes
    A_{\alpha_1^{k_1}, \dotsc, \alpha_N} \bigr ] \pinva 2 \dotsm \pinva N\Vec(\tnsr Y).
\end{split}
\end{equation*}
Since the Lagrange polynomials are a nodal basis, it holds that for all $i=1,
\dotsc, N$, that $\Psi_i(\alphaij i{k_i})=e_{k_i}$, where $e_{k_i}\in \mathbb
R^{d_i}$ is the $k_i$-th canonical basis vector.  Accordingly, the coefficient
matrix for $\Vec(\tnsr Y)$ becomes 
\begin{equation*}
    \sum_{k_N=1}^{d_N}\dotsm \sum_{k_2=1}^{d_2}\sum_{k_1=1}^{d_1}\wij
    N{k_N}\dotsc \wij 2{k_2} \wij 1{k_1}
    \bigl [e_{k_N}e_{k_N}^\trp  \otimes \dotsm \otimes e_{k_2}e_{k_2}^\trp
        \otimes e_{k_1}e_{k_1}^\trp \otimes
        A_{\alpha_1^{k_1}, \dotsc, \alpha_N^{k_N}}
\bigr ],
\end{equation*}
which is a completely decoupled system for every combination $(\alpha_1^{k_1},
\dotsc, \alpha_N^{k_N})$.


To derive the Galerkin POD reduced system, we replace $\Psi_i$ by $\hat \Psi
_i$, for $i=0, 1, \dotsc , N$ in \eqref{eqn:galerkinprojsys}. We assume that the
reduced bases were obtained as proposed by \Cref{thm:multidimpodspaces}.
The derivation, however, works for any (reduced) basis.

For illustration, we consider the case $N=1$, i.e., the spatial dimension and a
univariate uncertainty. Then, the reduced system coefficient matrix reads
\begin{equation}\label{eqn:redgalerkinprojsys}
  \begin{split}
    \int_\Gamma \int_\Omega 
    \bigl [\hPsi_1 \hPsi_1^\trp
    \otimes \hPsi_0 a_\alpha \hPsi_0^\trp  \bigr ] & \inva x \pinva{} \Vec(\htnsr Y) 
    = \\
                                                   &=
    \int_\Gamma 
    \bigl [\hPsi_1 \hPsi_1^\trp
    \otimes \int_\Omega \hPsi_0 a_\alpha \hPsi_0^\trp  \inva x \bigr ]
    \pinva{} \Vec(\htnsr Y) \\
                                                   &= 
    \int_\Gamma{
      \bigl [\hPsi_1 \hPsi_1^\trp   \otimes \hAal\bigr ] 
    }\pinva{} 
    \Vec({\htnsr Y})\\
                                                   &= 
    \sum_{k_1=1}^{d_1} \wij 1{k_1}
    \bigl [\hPsi_1(\alphaij 1{k_1}) \hPsi_1(\alphaij 1{k_1})^\trp   \otimes
    \hat {\mathbf A}_{\alpha_1^{k_1}} \bigr ] \Vec(\htnsr Y).
  \end{split}
\end{equation}
Here, the operator $\hAal$ is the POD projection of $\Aal$:
\begin{equation*}
  \hAal = V_{0,\hat d_0}^\trp  \Lvi 0^{-1} \Aal  \Lvi 0^{-\trp} V_{0,\hat d_0},
\end{equation*}
whereas, for the given choice of $\Psi_1$ and the weights $\wij 1{k_1}$,
$k_1=1,\dotsc, d_1$, one has 
\begin{equation*}
  \begin{split}
    \wij 1{k_1} \bigl [\hPsi_1(\alphaij 1{k_1}) \hPsi_1(\alphaij 1{k_1})^\trp \bigr ]
    &= 
    \wij 1{k_1} V_{1,\hat d_1}^\trp  \Lvi 1^{-1}\bigl [\hPsi_1(\alphaij 1{k_1}) \hPsi_1(\alphaij 1{k_1})^\trp \bigr ]\Lvi 1^{-\trp} V_{1,\hat d_1}
    \\ &=
    \wij 1{k_1}V_{1,\hat d_1}^\trp \bigl(\wij 1{k_1}\bigr)^{-1/2}
    e_{k_1}e_{k_1}^\trp\bigl(\wij 1{k_1}\bigr)^{-1/2}V_{1,\hat
    d_1} \\
       &=
    V_{1,\hat d_1}^\trp e_{k_1}e_{k_1}^\trp V_{1,\hat d_1};
  \end{split}
\end{equation*}
cp. \Cref{thm:multidimpodspaces}. By orthonormality of the POD basis, one
obtains that 
\begin{equation*}
\sum_{k_1=1}^{d_1} \wij 1{k_1} \bigl [\hPsi_1(\alphaij 1{k_1})
\hPsi_1(\alphaij 1{k_1})^\trp\bigr] = I;
\end{equation*}
see \cite[Rem. 2.6]{BauBH18},
which, however, does not help when the terms are multiplied with the
(non-constant) $\Aal$ in \eqref{eqn:redgalerkinprojsys}. Accordingly, the reduced
system does not decouple and, in this univariate case, requires the solution of
a $\hat d_0 \cdot \hat d_1$-dimensional system.

The derivation of the general reduced multivariate systems goes along the same
lines and results in a possibly fully coupled system of dimension
$\prod_{j=0}^N\hat d_j$ which still can be prohibitively large. For these cases,
one may consider leaving a certain dimension, say $\Gamma_N$, unreduced and
rather solve $d_N$ systems of size $\prod_{j=0}^{N-1}\hat d_j$. Using this idea
recursively one can balance the number of systems and their size.

\section{Numerical Example}\label{sec:num-example}

\begin{figure}
    \begin{center}
    \includegraphics[width=.45\textwidth]{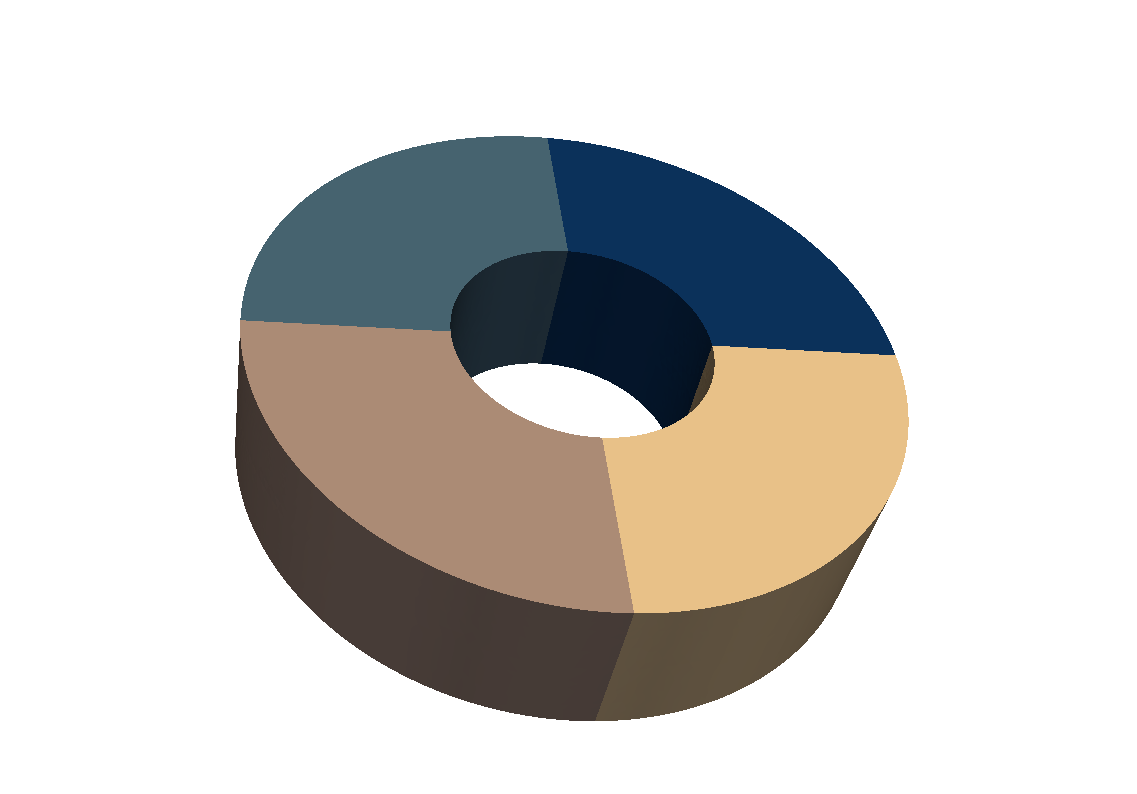}
    \includegraphics[width=.45\textwidth]{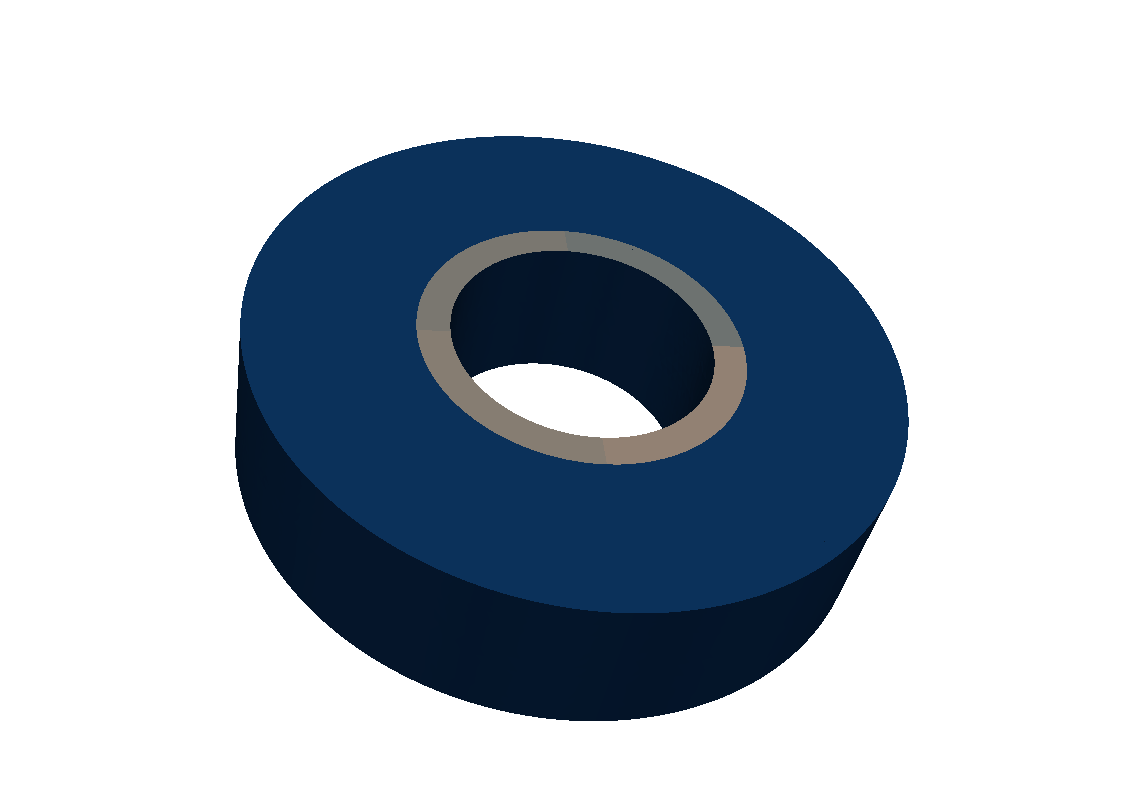}
\end{center}
\caption{The subdomains and the boundary patch for the measurements.}\label{fig:phys-facet-regs}
\end{figure}
\begin{figure}
\begin{minipage}{.6\textwidth}
    \includegraphics[width=\textwidth]{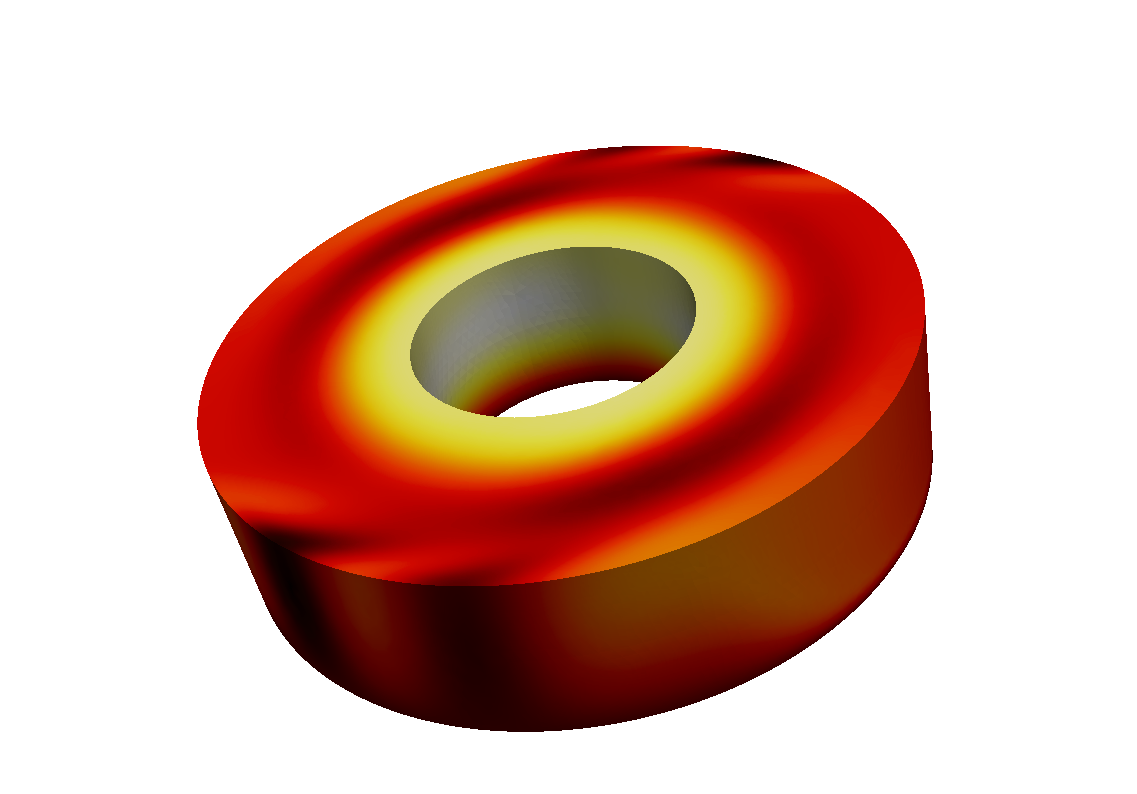}
  \end{minipage}
\begin{minipage}{.3\textwidth}
  \setlength\figureheight{5cm}
  \setlength\figurewidth{1.8cm}
  \includegraphics[]{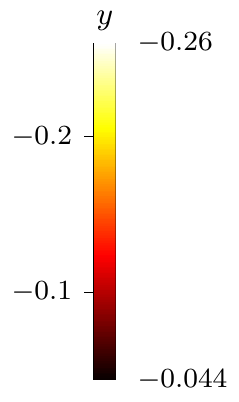}
    \end{minipage}
\caption{The solution $y$ for $\bar \alpha$ that is $\nu = 5\cdot 10^{-4}$.}
\label{fig:sol}
\end{figure}

\begin{figure}
\begin{minipage}{.6\textwidth}
      \includegraphics[width=\textwidth]{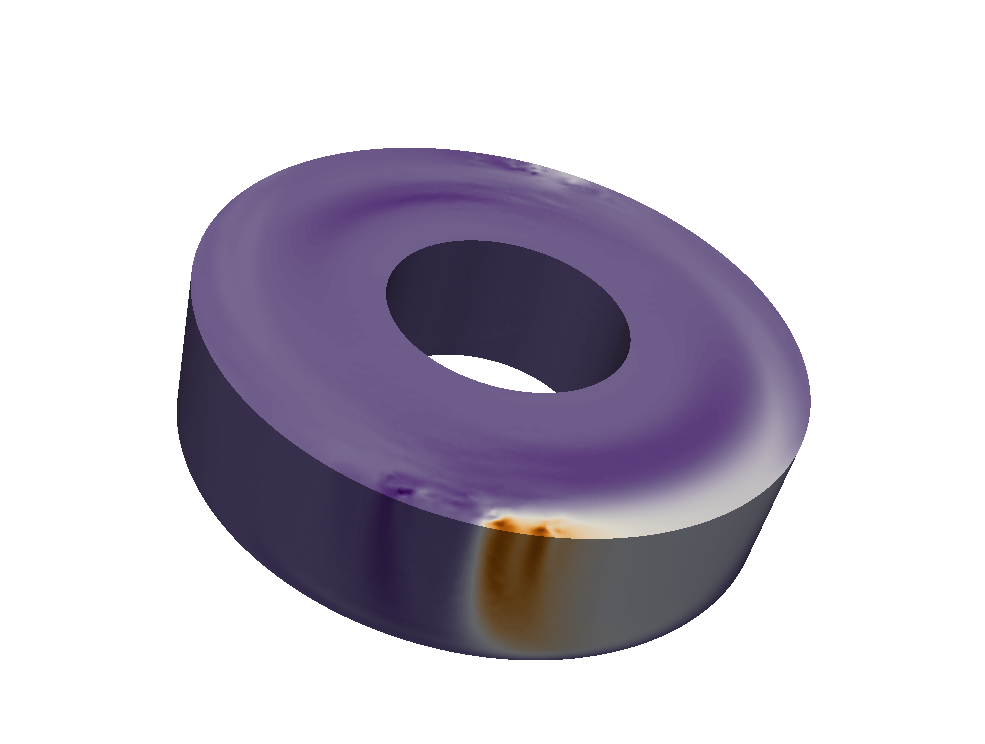}
    \end{minipage}
\begin{minipage}{.3\textwidth}
  \setlength\figureheight{5cm}
  \setlength\figurewidth{1.8cm}
  \includegraphics[]{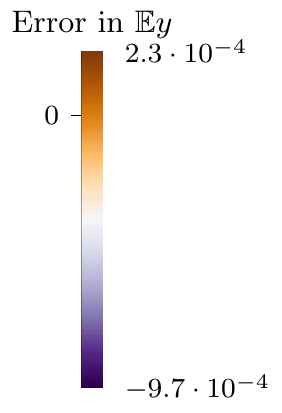}
    \end{minipage}
\caption{The difference in $\mathbb Ey$ computed via the \pcex 5 and the POD
approximation of dimension \texttt{\poddim=9} on the base of \pcex 2.}\label{fig:pcepoddiff}
\end{figure}

Motivated by \cite[Example 3.1]{GarU17}, we consider a stationary convection
diffusion problem as in \eqref{eqn:poissonproblem} with uncertainty in the
conductivity coefficient. 

As the geometrical setup, let $\Omega\subset\mathbb R^{3} $ be a cylindrical domain
of radius $R_{\textsf{o}}=1$ without its core of radius $R_{\textsf{i}}=0.4$
that is subdivided into $4$ subdomains $\Omega_i$, $i=1,2,3,4$, as illustrated in
\Cref{fig:phys-facet-regs}.

To model the uncertainty in the conductivity coefficient $\kappa$, independently on each subdomain $\Omega_i$, we assume $\kappa$ to be a random variable of a random parameter $\alpha_i$ via
\begin{equation*}
    \kappa \bigl|_{\Omega_i} = \bar \kappa + \alpha_i 
\end{equation*}
where $\bar \kappa$ is a reference value, and write $\kappa(\alpha)$ to express the dependence on the random parameter. 

In the presented example, we set $\bar \kappa = 5\cdot 10^{-4}$ and let $\alpha_i$ be
uniformly distributed on 
\begin{equation*}
\Gamma_i = [-2 \cdot 10^{-4}, 2\cdot 10^{-4}], \quad\text {for }i=1,2,3,4.
\end{equation*}

As for boundary conditions, we apply zero Dirichlet conditions at the bottom of
the domain and zero Neumann conditions elsewhere.

Without particular intentions, the convection $b$ is chosen as 
\begin{equation*}
    b(s_1, s_2, s_3) =
    \begin{bmatrix} 
        (s_1^2+s_2^2-1)s_2 \\
        -(s_1^2+s_2^2-1)s_1 \\
        s_1^2\sin(2s_3)
\end{bmatrix}
\end{equation*}
and the inhomogeneity as 
\begin{equation*}
    f(s_1, s_2, s_3) =
    \begin{cases}
        -\sin(2\pi s_1) \sin(4 \pi s_2) s_3 (0.5-s_3), & \quad\text{for }(s_1,
        s_2, s_3) \in \Omega_1 \cup \Omega_3, \\
        0, & \quad\text{for }(s_1, s_2, s_3) \in \Omega_2 \cup \Omega_4;
    \end{cases}
\end{equation*}
see \Cref{fig:sol} for a snapshot of the solution at $\kappa(0)=\bar
\kappa$.

Moreover, we use $Cy$ defined as the spatially averaged value of $y$ over a
concentric annular ring of diameter $0.1$ that is aligned with the inner
boundary at the top surface of the domain; see \Cref{fig:phys-facet-regs}
for the arrangement of the domain of observation.

The values of interest of this numerical study are the expected value $\mathbb
E$ and the variance $\mathbb V$ of $Cy$ that we approximate by a PCE with
various levels of refinement. 

For the spatial discretization, we use continuous and piecewise linear finite
elements on a discretization of the domain by tetrahedra. Although the mesh is
refined at the critical parts, namely the edges of the domain and the surfaces
where the observation is taken and the Dirichlet condition is applied, we need
about $150,000$ degrees of freedom for the spatial dimension to have a relative
error with respect to the finest considered discretization of less than
$10^{-4}$; see \Cref{tab:meshtest}.

In the experiments we used PCE with the same number of degrees of freedom
\texttt{pcedim} for all uncertainty dimensions and write \texttt{pce[d]} to
refer to the PCE discretization of dimension $d$ as well as the expected
value/variance
of $Cy$ based on this discretization. 
As can be seen in \Cref{tab:pcedimsexpy}, for computing the expected value/variance
of $Cy$, convergence of the PCE discretization is achieved already for low
dimensions. However, although the computations are well parallelized, the
computation times for the moderate PCE discretizations are already in the order
of days; see \Cref{tab:pcedimsexpy}.

\begin{table}
    \begin{tabular}{rc}
      Spatial DOFs & $Cy\bigl |_{\kappa = 4\cdot 10^{-4}} $ \\
        \hline
      \texttt{56951} & $1.116582$ \\
      \texttt{72206} & $1.077443$ \\
      \texttt{90458} & $1.069372$ \\
      \texttt{127771} & $1.069769$ \\
      \texttt{154545} & $1.065885$ \\
      \texttt{192786} & $1.065997$ \\
      \texttt{237941} & $1.064628$
    \end{tabular}
    \caption{Computed $Cy$ at $\kappa = 4\cdot 10^{-4}$ versus the number of
    degrees of freedom for the spatial discretization.}
    \label{tab:meshtest}
\end{table}

\begin{table}
    \begin{tabular}{cccc}
        \pcedim & \texttt{pce[.]} & \ctime~[s] & difference to \pcefive\\
        \hline
      \texttt{2} & $0.8809823$/$0.00897246$ & \texttt{1248.45} & $ -3.9\cdot
      10^{-5}$/ $ -1.0\cdot 10^{-4}$  \\
      \texttt{3} & $0.8810921$/$0.00908018$ & \texttt{7097.21}& $ \phantom{-}7.1\cdot
      10^{-5}  $/$ \phantom{-}9.8\cdot 10^{-6}$   \\
      \texttt{4} & $0.8810151$/$0.00907037$ & \texttt{20059.9} & $ -6.0\cdot
      10^{-6}$  /$-4.2\cdot 10^{-7}$   \\
      \texttt{5} & $0.8810211$/$0.00907079$ & \texttt{49365.4} & ---
    \end{tabular}
    \caption{The computed expected value/variance of $Cy$ based on a PCE discretization, the runtime of its computation, as well as the difference to the value of the finest computed discretization
    versus the dimension of the PCE.}
    \label{tab:pcedimsexpy}
\end{table}

This gives motivation for the use of the Galerkin POD approach that, as we will
prove, is capable to improve the estimate of a coarse PCE discretization by one
order of magnitude with little computational overhead. 

For that, we use the tensor of coefficients of the \pcetwo~discretization to
compute a basis for the space discretization that is optimal in terms of 
\Cref{thm:multidimpodspaces}. We set up the reduced models of varying size
(which we denote by \poddim) and compare the computed differences to the
expected value/variance of \pcefive~for various levels of PCE; see
\Cref{tab:poderrrandpce}. 

The distribution of the error of the POD approximation of the expected value of
the variable $y$ is plotted in \Cref{fig:pcepoddiff}.

Note that because PCE(2) leads to $2^4=16$ snapshots, POD dimensions larger
than $16$ do not add additional information to the reduced system; cp. also
\Cref{tab:projection-error} where we tabulate the projection error from
the POD reduction as defined in \eqref{eqn:projectionerror}.

We find that, for the expected value $\mathbb E y$, 
with \texttt{\poddim=6} the reduced order model recovers the
difference between \pcetwo~and \pcefive~and that for \texttt{\poddim=15}
and \texttt{\poddim=16} and a \pcedim~that exceeds the training data, the
approximation error is in the order of finer PCE
discretizations with the fine model, which is about $6\cdot 10^{-6}$; compare
\Cref{tab:pcedimsexpy} and \Cref{tab:poderrrandpce}.

As for the timings, we note that for these small POD dimensions, the effort for
computing the POD modes (around \texttt{5}s) and evaluating the reduced
models (around \texttt{0.5}s) is negligible if compared to the time to compute
the data or even the evaluation of \pcefive~with the full model; see 
\Cref{tab:pcedimsexpy}.

These results show that with the multidimensional Galerkin-POD reduction, we can use the
\pcetwo~data to compute an approximation to the expected value that is more
accurate than \pcefour~in just a $1/16$th of the computational time
(about \texttt{1253} vs. \texttt{20059.9} seconds.)

As for the approximation of the variance $\mathbb V Cy$, the Galerkin-POD
reduced model (see \autoref{tab:poderrpcebasevrnc} significantly improves the \pcex 2 approximation and almost reaches the accuracy 
of \pcex 3 in less than 1/5 of the computational time
(about \texttt{1253} vs. \texttt{7097} seconds.)

\begin{table}
    \begin{tabular}{ccccc}
      \poddim & \pcetwo & \pcethree & \pcefour & \pcefive  \\
        \hline
      \texttt{3 } &  $2.99\cdot 10^{-4}$ & $2.60\cdot 10^{-4}$ & $2.59\cdot 10^{-4}$ & $2.59\cdot 10^{-4}$ \\
      \texttt{6 } &  $3.34\cdot 10^{-5}$  & $1.05\cdot 10^{-6}$ & $1.20\cdot 10^{-6}$ & $1.20\cdot 10^{-6}$ \\
      \texttt{9 } &  $3.86\cdot 10^{-5}$ & $3.51\cdot 10^{-6}$ & $3.29\cdot 10^{-6}$ & $3.29\cdot 10^{-6}$ \\
      \texttt{12} &  $3.88\cdot 10^{-5}$ & $1.09\cdot 10^{-5}$ & $1.09\cdot 10^{-5}$ & $1.09\cdot 10^{-5}$ \\
      \texttt{15} &  $3.88\cdot 10^{-5}$ & $8.27\cdot 10^{-6}$ & $8.26\cdot 10^{-6}$ & $8.26\cdot 10^{-6}$ \\
\texttt{16} &$3.88\cdot 10^{-5}$ &  $4.48\cdot 10^{-6}$ & $4.36\cdot 10^{-6}$ & $4.37\cdot 10^{-6}$
\end{tabular}
\caption{Absolute value of the error in the POD approximation of $\mathbb E Cy$
  for various POD dimensions of the spatial discretization and various PCE
  levels. The POD approximation is based on the data of \pcetwo, i.e. $16$
  snapshots located at corresponding quadrature points.}
\label{tab:poderrrandpce}
\end{table}

\begin{table}
    \begin{tabular}{ccccc}
      \poddim & \pcetwo & \pcethree & \pcefour & \pcefive  \\
        \hline
      \texttt{3 } &$2.46\cdot 10^{-4}$ & $1.56\cdot 10^{-4}$ & $1.55\cdot 10^{-4}$ & $1.55\cdot 10^{-4}$ \\
      \texttt{6 } &$9.72\cdot 10^{-5}$ & $9.59\cdot 10^{-6}$ & $9.07\cdot 10^{-6}$ & $9.07\cdot 10^{-6}$ \\
      \texttt{9 } &$9.88\cdot 10^{-5}$ & $1.23\cdot 10^{-5}$ & $1.17\cdot 10^{-5}$ & $1.17\cdot 10^{-5}$ \\
      \texttt{12} &$9.83\cdot 10^{-5}$ & $1.50\cdot 10^{-5}$ & $1.46\cdot 10^{-5}$ & $1.46\cdot 10^{-5}$ \\
      \texttt{15} &$9.83\cdot 10^{-5}$ & $1.44\cdot 10^{-5}$ & $1.41\cdot 10^{-5}$ & $1.41\cdot 10^{-5}$ \\
      \texttt{16} &$9.83\cdot 10^{-5}$ & $1.33\cdot 10^{-5}$ & $1.29\cdot 10^{-5}$ & $1.29\cdot 10^{-5}$
\end{tabular}
\caption{Absolute value of the error in the POD approximation of the
variance $\mathbb V Cy$ for various POD dimensions of the spatial discretization
and various PCE levels. The POD approximation is based on the data of \pcetwo,
i.e. $16$ snapshots located at corresponding quadrature points.}
\label{tab:poderrpcebasevrnc}
\end{table}

To illustrate the fundamental benefit of including the PCE expansion in the POD
definition via the product space approach, we investigate the approximation by
reduced models based on random snapshots. 
It turns out that, for the same number of snapshots as with the PCE approach,
the approximation errors may reach a similar level but slightly higher level as the snapshots based on the
PCE abscissae; see \autoref{tab:poderrrandpcemcbasmd}. However, the randomness in
the snapshots makes the approximation unreliable. In fact, the median of $10$
samples gave a worse approximation than the median of $5$ samples.
In the worse case, the error level is one order of magnitude above the error that is achieved
with the same effort via the PCE based reduction $2$; see \autoref{tab:pcedimsexpy}. 

Interestingly, for the approximation of the variance $\mathbb V Cy$, the reduced
model based on random snapshots performs as well as the PCE based reduction; see
\autoref{tab:poderrrandpcemcbasvrnc}.

\begin{table}
    \begin{tabular}{ccccc}
      \poddim & \pcetwo & \pcethree & \pcefour & \pcefive  \\
        \hline
      \texttt{3 } &$1.3\cdot 10^{-4}$/$2.4\cdot 10^{-4}$ & $1.9\cdot 10^{-4}$/$2.0\cdot 10^{-4}$ & $1.9\cdot 10^{-4}$/$2.0\cdot 10^{-4}$ & $1.9\cdot 10^{-4}$/$2.0\cdot 10^{-4}$ \\
      \texttt{6 } &$2.9\cdot 10^{-4}$/$4.2\cdot 10^{-5}$ & $2.3\cdot 10^{-4}$/$9.2\cdot 10^{-5}$ & $2.2\cdot 10^{-4}$/$9.2\cdot 10^{-5}$ & $2.2\cdot 10^{-4}$/$9.2\cdot 10^{-5}$ \\
      \texttt{9 } &$8.3\cdot 10^{-5}$/$8.2\cdot 10^{-5}$ & $1.3\cdot 10^{-4}$/$1.3\cdot 10^{-4}$ & $1.3\cdot 10^{-4}$/$1.5\cdot 10^{-4}$ & $1.3\cdot 10^{-4}$/$1.5\cdot 10^{-4}$ \\
      \texttt{12} &$9.1\cdot 10^{-5}$/$2.4\cdot 10^{-5}$ & $3.3\cdot 10^{-5}$/$5.5\cdot 10^{-5}$ & $3.2\cdot 10^{-5}$/$6.2\cdot 10^{-5}$ & $3.2\cdot 10^{-5}$/$1.0\cdot 10^{-4}$ \\
      \texttt{15} &$1.2\cdot 10^{-5}$/$1.1\cdot 10^{-5}$ & $5.3\cdot 10^{-6}$/$5.0\cdot 10^{-5}$ & $4.7\cdot 10^{-6}$/$7.0\cdot 10^{-5}$ & $4.7\cdot 10^{-6}$/$7.3\cdot 10^{-5}$ \\
      \texttt{16} &$2.6\cdot 10^{-5}$/$2.5\cdot 10^{-5}$ & $2.5\cdot 10^{-5}$/$4.7\cdot 10^{-5}$ & $1.4\cdot 10^{-5}$/$4.3\cdot 10^{-5}$ & $7.3\cdot 10^{-6}$/$3.8\cdot 10^{-5}$   
\end{tabular}
\caption{Absolute value of the error in the POD approximation of $\mathbb E_\alpha Cy$ for various POD
dimensions of the spatial discretization based on $16$ random snapshots (median
value out of $5$/$10$ realizations)}
\label{tab:poderrrandpcemcbasmd}
\end{table}

\begin{table}
    \begin{tabular}{ccccc}
      \poddim & \pcetwo & \pcethree & \pcefour & \pcefive  \\
        \hline
      \texttt{3 } &$5.7\cdot 10^{-4}$/$2.9\cdot 10^{-5}$ & $6.9\cdot 10^{-4}$/$6.1\cdot 10^{-5}$ & $6.9\cdot 10^{-4}$/$6.2\cdot 10^{-5}$ & $6.9\cdot 10^{-4}$/$6.2\cdot 10^{-5}$ \\
      \texttt{6 } &$4.7\cdot 10^{-5}$/$7.7\cdot 10^{-5}$ & $1.6\cdot 10^{-4}$/$2.8\cdot 10^{-5}$ & $1.6\cdot 10^{-4}$/$2.9\cdot 10^{-5}$ & $1.6\cdot 10^{-4}$/$2.9\cdot 10^{-5}$ \\
      \texttt{9 } &$6.4\cdot 10^{-5}$/$1.0\cdot 10^{-4}$ & $3.6\cdot 10^{-5}$/$2.9\cdot 10^{-5}$ & $3.7\cdot 10^{-5}$/$2.0\cdot 10^{-5}$ & $3.7\cdot 10^{-5}$/$9.7\cdot 10^{-6}$ \\
      \texttt{12} &$9.1\cdot 10^{-5}$/$1.0\cdot 10^{-4}$ & $9.6\cdot 10^{-6}$/$5.5\cdot 10^{-6}$ & $1.0\cdot 10^{-5}$/$1.1\cdot 10^{-5}$ & $1.0\cdot 10^{-5}$/$9.8\cdot 10^{-6}$ \\
      \texttt{15} &$1.0\cdot 10^{-4}$/$1.1\cdot 10^{-4}$ & $1.3\cdot 10^{-5}$/$1.0\cdot 10^{-5}$ & $1.3\cdot 10^{-5}$/$1.0\cdot 10^{-5}$ & $1.3\cdot 10^{-5}$/$8.9\cdot 10^{-6}$ \\
      \texttt{16} &$1.0\cdot 10^{-4}$/$1.0\cdot 10^{-4}$ & $1.6\cdot 10^{-5}$/$4.7\cdot 10^{-6}$ & $4.1\cdot 10^{-5}$/$1.1\cdot 10^{-5}$ & $3.8\cdot 10^{-5}$/$1.3\cdot 10^{-5}$   
\end{tabular}
\caption{Absolute value of the error in the POD approximation of the variance $\mathbb V_\alpha Cy$ for various POD
dimensions of the spatial discretization based on $16$ random snapshots (median
value out of $5$/$10$ realizations)}
\label{tab:poderrrandpcemcbasvrnc}
\end{table}

Thus, we conclude that because of the randomness that is
not compensated by an improved performance, a POD based on random snapshots is
not well suited to approximate a system with uncertain coefficients. 

This is also indicated by the behavior of the projection error that we quantify
as follows. If $\{y(\alpha^i)\}_{i=1,\dotsc, k}$ with $a^i=(\alpha_1^i, \alphaij 2i, \alphaij 3i, \alphaij 4i)$ is a realization of a set
of snapshots, then the corresponding $k' $ POD modes are the $k' $ leading left
singular vectors of the matrix
\begin{equation*}
    \mathbf L_{\mathcal Y}^\trp 
    \begin{bmatrix}
        y(\alpha^1) & y(\alpha^2) & \dotsm y(\alpha^k) 
    \end{bmatrix},
\end{equation*}
where $\mathbf L_{\mathcal Y}$ is a Cholesky factor of the mass matrix $\My$~of
the finite element discretization; cp. \Cref{thm:multidimpodspaces}. Let
those singular vectors be the columns of the matrix $V_{\mathcal Y,k' }$. Then the
projection of the snapshots reads
\begin{equation*}
    \Ly^{-\trp}V_{\mathcal Y,k' }V_{\mathcal Y,k' }^\trp \Ly
    \begin{bmatrix}
        y(\alpha_1) & y(\alpha_2) & \dotsm y(\alpha_k) 
    \end{bmatrix}
\end{equation*}
and the projection error in the estimated mean of $Cy$ becomes
\begin{equation}\label{eqn:randsnapprojerr}
    e_{Cy;k,k' } := \frac{1}{k}\|  C [I-\Ly^{-\trp}V_{\mathcal Y,k' }V_{\mathcal
    Y,k' }^\trp \Ly] 
    \begin{bmatrix}
        y(\alpha_1) & y(\alpha_2) & \dotsm y(\alpha_k) 
    \end{bmatrix}\|_{1} .
\end{equation}

For the case of $16$ random snapshots, unlike the PCE case tabulated in Table
\ref{tab:projection-error}, the projection error stagnates at the level of
$10^{-10}$ (see \Cref{tab:randsnap-ld-projection-error}) and only drops down
to machine precision for $k=k' $, where the projection becomes the identity.
More random snapshots do not improve this situation; see the lower row of 
\Cref{tab:randsnap-ld-projection-error} where we report the projection errors for
$80$ random snapshots. In fact, the reduced models based on $80$ random
snapshots did not provide a measurable improvement over the results displayed in
\Cref{tab:poderrrandpcemcbasmd} so that we do not report them here.


All numerical computations were parallelized in $16$ threads and performed on
 a cluster computing node with $2$ Intel Xeon Silver 4110 CPUs with $2.10$GHz,
 $2\cdot 8$ virtual cores and $188$GB RAM. The reported timings are the
minimum wall time out of $5$ runs. The codes that set up, perform, and post
process the numerical examples as well as the raw data of the presented cases
are available as laid out in Figure \ref{fig:linkcodndat}.

\begin{table}
    \begin{tabular}{c|cccccc}
      \texttt{\poddim} &\texttt{3} &\texttt{6} &\texttt{9} &\texttt{12}
                       &\texttt{15} &\texttt{16} \\
                       \hline
      Projection error & $5.1\cdot 10^{-6}$&$ 3.1\cdot 10^{-8}$&$ 3\cdot
      10^{-9}$&$ 9.5\cdot 10^{-12}$&$ 3\cdot 10^{-14} $&$ 2.4\cdot 10^{-15}$
  \end{tabular}
  \caption{The projection error for varying dimension of the reduced space}
  \label{tab:projection-error}
\end{table}

\begin{table}
    \begin{tabular}{c|cccccc}
      $\texttt{k'}$ &\texttt{3} &\texttt{6} &\texttt{9} &\texttt{12}
                       &\texttt{15} &\texttt{16} \\
                       \hline
      $e_{Cy;16, k'}$  & $5.96\cdot 10^{-6}$ & $ 1.1\cdot 10^{-7}$ & $1.88\cdot 10^{-8} $&$ 3.99\cdot 10^{-9} $ &$
      2.34\cdot 10^{-10} $&$ 3.1\cdot 10^{-15}$ \\
      $e_{Cy;80,k'}$ & $5.94\cdot 10^{-6} $&$ 8.34\cdot 10^{-8} $&$ 4.03\cdot 10^{-8} $&$ 1.37\cdot 10^{-8} $ &
      $8.99\cdot 10^{-9} $ & $8.37\cdot 10^{-9}$
  \end{tabular}
  \caption{The projection error in the estimated mean as defined in
  \eqref{eqn:randsnapprojerr} for $k=16$ and $k=80$ random snapshots and for
  varying dimension \poddim~of
  the reduced space (median values out of $5$ realizations).}
  \label{tab:randsnap-ld-projection-error}
\end{table}


\section{Verification of the Approach}

The presented numerical example showed that the proposed Galerkin-POD reduction
leads to a significant speedup and memory savings in the PCE approximation. 

In order to verify the PCE approach for uncertainty quantification for
convection-diffusion problems as considered above, we present two illustrative examples that have similar
dynamics but that allow for an analytic expression of the expected values and
variances as
well as for extensive \emph{Monte Carlo} simulations for comparison.

The examples are motivated by the observation that for $b=0$ in
\eqref{eqn:poissonproblem},
the solution $y$ to the discrete problem
is given as
\begin{equation}
  y(\alpha) = \Aal ^{-1} f
\end{equation}
where $\Aal$ is the discrete Laplacian and $f$ is the right hand side. 
For this problem and a given observation operator $C$, the expected value of
$Cy$ is given as
\begin{equation*}
  \int_\Gamma C\Aal ^{-1} f \pinva{\alpha} 
\end{equation*}

For the first example, we mimick the situation that the diffusion coefficient
is constant in space and dependent on a univariate distribution so that
$\Aal=\alpha_1 \Aa$ and so that, for the solution
$\yxo(\alpha_1)=\frac{1}{\alpha_1}\Aal^{-1}f$, the expected value reads
\begin{equation*}
  \Eyxo = \int_\Gamma C\Aal ^{-1} f \pinva{\alpha} 
  =
  C\int_{\Gamma_1} \frac{1}{\alpha_1} \pinva{\alpha_1}\Aa ^{-1}  f,
\end{equation*}
which, for $\alpha_1$ being uniformly distributed on $\Gamma_1 = [\underline
\alpha_1, \overline \alpha_1]$, becomes
\begin{equation*}
  \Eyxo = \frac{1}{\overline \alpha _1 - \underline \alpha _1}\int^ {\overline \alpha _1} _{\underline \alpha _1}
  \frac{1}{\alpha} \inva \alpha C \Aa^{-1}f.
\end{equation*}
With the same arguments, the variance can be computed as 
\begin{equation*}
  \Vyxo = \frac{1}{\overline \alpha _1 - \underline \alpha _1}\int^ {\overline \alpha _1} _{\underline \alpha _1}
  \frac{1}{\alpha^2} \inva\alpha ( C \Aa^{-1}f)^2 - \Eyxo^2.
\end{equation*}

Since $C$, $\Aa^{-1}$, and $f$ are but constant factors, we can set them to $1$
and the expected
performance of PCE or \emph{Monte Carlo} for such a case can be analyzed by
their performance in the numerical integration of the integral 
\begin{equation}\label{eq:eyxo-vyxo}
\Eyxo = \frac{1}{\overline \alpha _1 - \underline \alpha _1}\int^ {\overline
\alpha _1} _{\underline \alpha _1}\frac{1}{\alpha} \inva \alpha
\quad\text{or}\quad
\Vyxo = \frac{1}{\overline \alpha _1 - \underline \alpha _1}\int^ {\overline
\alpha _1} _{\underline \alpha _1}\frac{1}{\alpha^2} \inva \alpha - \Eyxo^2.
\end{equation}

For the second example, we set
\begin{equation*}
  \alpha = (\alpha_1, \alpha_2), \quad 
  \Aal = 
  \begin{bmatrix} 
    \alpha_1 & \epsilon \\ \epsilon & \alpha_2
  \end{bmatrix}, \quad
  f = 
  \begin{bmatrix}
    1 \\ 1
  \end{bmatrix}, \quad
  C = 
  \begin{bmatrix}
    1 & 1
  \end{bmatrix}
\end{equation*}
so that, with $\alpha_1$ as above and $\alpha_2$ being distributed uniformly on
$\Gamma_2=[\underline \alpha_2, \overline \alpha_2]$, the expected value and the
variance for the corresponding solution $\yxt(\alpha_1, \alpha_2)$ read
\begin{equation}\label{eq:eyxt-vyxt}
  \begin{split}
    \Eyxt &= \frac{1}{\overline \alpha _1 - \underline \alpha_1}
  \frac{1}{\overline \alpha _2 - \underline \alpha_2}
\int^{\overline \alpha _1}_{\underline \alpha_1}
\int^{\overline \alpha _2}_{\underline \alpha_2}
\frac{1}{\alpha_1\alpha_2 - \epsilon^2}(\alpha_1 + \alpha_2 - 2\epsilon)
\inva \alpha_2 \inva \alpha_1
\quad\text{and}\\
\Vyxt &= \frac{1}{\overline \alpha _1 - \underline \alpha_1}
  \frac{1}{\overline \alpha _2 - \underline \alpha_2}
\int^{\overline \alpha _1}_{\underline \alpha_1}
\int^{\overline \alpha _2}_{\underline \alpha_2}
\biggl[
\frac{1}{\alpha_1\alpha_2 - \epsilon^2}(\alpha_1 + \alpha_2 - 2\epsilon)
\biggr]^2
\inva \alpha_2 \inva \alpha_1- (\Eyxt)^2.
\end{split}
\end{equation}

This example simulates the case of a diffusion process with two compartments
with different random diffusion parameters and with a constant $\epsilon$ as the
parameter of the coupling.

For the two examples \eqref{eq:eyxo-vyxo} and \eqref{eq:eyxt-vyxt}, we use the parameters
\begin{equation*}
  \underline \alpha_1 = \underline \alpha_2 = 3\cdot 10^{-4}, \quad
  \overline \alpha_1 =\overline \alpha_2 = 7\cdot 10^{-4}, \quad
  \epsilon = 1\cdot 10^{-4}
\end{equation*}
and compute the reference values for the means and variances
\begin{equation*}
  \Eyxo=2118.24465097,\quad
  \Vyxo=274944.360550,\quad
  \Eyxt=3504.22709343 ,\quad
  \Vyxt=261037.034256,
\end{equation*}
via evaluating the integrals with the help of a computer algebra package.

With the reference values at hand, we can estimate the approximation quality of
the PCE and MC simulations. The PCE simulation provides stable and quickly
converging approximations of the expected values and variances for the example
problems; see \Cref{tab:pce-eyxo-vyxo} and \Cref{tab:pce-eyxt-vyxt}.
Opposed to that, plain Monte Carlo simulations, show very slow convergence; see
\Cref{tab:mc-eyxo-vyxo} and \Cref{tab:mc-eyxt-vyxt}. In fact, for
example, for estimating the expected value $\Eyxt$ up to a relative error in the
order of $10^{-5}$, it takes $1,000,000$ Monte Carlo simulations or $16$
simulations for the \pcex 4 approximation. 

Since the numerical example of \Cref{sec:num-example} has a similar structure
as the two illustrative examples of this section, we conclude that the proposed
PCE discretization is well suited for this kind of multivariate uncertainty
quantification. Also, we note that a plain Monte Carlo simulation for
verification purposes is infeasible in the large-scale setup as in
\Cref{sec:num-example}, where one single forward simulation lasts about one
minute.

\begin{table}
\begin{tabular}{c|cccc}
    Method &
\pcex 3 & \pcex 4 & \pcex 5 & \pcex 6 \\
    \hline
    Relative error for $\Eyxo$ &
$-1.18\cdot 10^{-4}$ & $-5.24\cdot 10^{-6}$ & $-2.31\cdot 10^{-7}$ & $-1.01\cdot 10^{-8}$ \\
    Relative error for $\Vyxo$ &
$-1.00\cdot 10^{-2}$ & $-6.21\cdot 10^{-4}$ & $-3.51\cdot 10^{-5}$ & $-1.88\cdot 10^{-6}$
  \end{tabular}
  \caption{Approximation errors for the 1D problem \eqref{eq:eyxo-vyxo} with PCE
    discretizations \pcex N~with \texttt{N} degrees of freedom in the
  uncertainty dimension.}
  \label{tab:pce-eyxo-vyxo}
\end{table}

\begin{table}
\begin{tabular}{c|ccc}
    Method &
    \mcx {10,000} & \mcx {100,000} & \mcx{1,000,000} \\
    \hline
    Relative error for $\Eyxo$ &
    $\phantom{-} 4.01\cdot 10^{-4}$ & $-1.37\cdot 10^{-4}$ & $-9.35\cdot 10^{-5}$ \\
    Relative error for $\Vyxo$ &
    $\phantom{-} 2.36\cdot 10^{-3}$ & $\phantom{-} 1.82\cdot 10^{-3}$ & $\phantom{-} 4.21\cdot 10^{-4}$
  \end{tabular}
  \caption{Approximation errors for the 1D problem \eqref{eq:eyxo-vyxo} with Monte
  Carlo simulations \mcx N~with \texttt{N} simulations. (Median value out of 15
realizations).}
  \label{tab:mc-eyxo-vyxo}
\end{table}

\begin{table}
\begin{tabular}{c|cccc}
    Method &
\pcex 3 & \pcex 4 & \pcex 5 & \pcex 6 \\
    \hline
    Relative error for $\Eyxt$ &
$-1.23\cdot 10^{-4}$ & $-6.01\cdot 10^{-6}$ & $-2.91\cdot 10^{-7}$ & $-1.41\cdot 10^{-8}$ \\
    Relative error for $\Vyxt$ &
$-1.20\cdot 10^{-2}$ & $-8.16\cdot 10^{-4}$ & $-5.07\cdot 10^{-5}$ & $-2.98\cdot 10^{-6}$
  \end{tabular}
  \caption{Approximation errors for the 2D problem \eqref{eq:eyxt-vyxt} with PCE
    discretizations \pcex N~with \texttt{N} degrees of freedom in every
  uncertainty dimension.}
  \label{tab:pce-eyxt-vyxt}
\end{table}

\begin{table}
\begin{tabular}{c|ccc}
    Method &
    \mcx {10,000} & \mcx {100,000} & \mcx{1,000,000} \\
\hline
    Relative error for $\Eyxt$ &
    $\phantom{-}7.09\cdot 10^{-4}$ & $\phantom{-}1.11\cdot 10^{-4}$ & $-3.71\cdot 10^{-5}$ \\
    Relative error for $\Vyxt$ &
$-4.75\cdot 10^{-3}$ & $-3.61\cdot 10^{-4}$ & $-3.74\cdot 10^{-4}$
  \end{tabular}
  \caption{Approximation errors for the 2D problem \eqref{eq:eyxt-vyxt} with Monte
    Carlo simulations \mcx N~with \texttt{N} simulations. (Median value out of 11
realizations).}
  \label{tab:mc-eyxt-vyxt}
\end{table}

\newpage
\section{Conclusion}

The theory of multidimensional Galerkin POD naturally applies to problems with multivariate
uncertainties and can be made tractable for numerical experiments by exploiting
the underlying tensor structures. 
The multidimensional POD that includes Polynomial Chaos Expansions of the
candidate solutions lead to a significant efficiency gain in the uncertainty
quantification as we have illustrated in a linear convection diffusion example.
For comparison, the direct POD approach based on random snapshots is somewhat
inconclusive. In a few setups, it well competes with the PCE based
reduction but, generally, the approximation is worse and without showing reliable
trends that can be used for finding preferable configurations of number of
snapshots and dimensions of the reduced order model. 
Future work will include the investigation of POD reduction also for the PCE
dimensions and the inclusion of these reduced models for optimal control of
uncertain systems.

\begin{figure}[h]
    \begin{framed}
        \textbf{Code and Data Availability} \\
        The source code of the implementations used to compute the presented results is available from:
        \begin{center}
            \href{https://doi.org/10.5281/zenodo.4005724}{\texttt{doi:10.5281/zenodo.4005724}}\\
            \href{https://github.com/mpimd-csc/multidim-genpod-uq}{\texttt{github.com/mpimd-csc/multidim-genpod-uq}}
        \end{center}
        under the MIT license and is authored by Jan Heiland.
    \end{framed}
    \caption{Link to code and data.}\label{fig:linkcodndat}
\end{figure}



\end{document}